\documentclass[11pt]{amsart}
\usepackage[margin=1.23in]{geometry}
\usepackage{mathrsfs,amsmath,amsfonts,amssymb,amsthm}
\usepackage{graphicx}
\usepackage[dvipsnames,svgnames,table]{xcolor} 
\usepackage[linktocpage=true,colorlinks=true,linkcolor=RubineRed!85!black,citecolor=ForestGreen,urlcolor=green,pdfborder={0 0 0}]{hyperref}

\theoremstyle{plain}
\newtheorem{theorem}{Theorem}[section]

\newtheorem{lemma}[theorem]{Lemma}
\newtheorem{corollary}[theorem]{Corollary}

\title{Stationary kinetic Krylov-Safonov estimates in space dimension one}
\author{Yuzhe Zhu}
\date{August 31, 2026}

\begin{document}
\begin{abstract}
We prove interior Hölder regularity for solutions of the stationary kinetic Fokker-Planck equation in non-divergence form in space dimension one. The estimate is independent of any small-oscillation condition on the diffusion coefficient. 
\end{abstract}
\maketitle

\section{Introduction}
The Krylov-Safonov theory is one of the basic tools in the study of regularity problems for uniformly elliptic and parabolic equations in non-divergence form. It establishes H\"older estimates for the equations assuming only that the diffusion coefficients are uniformly elliptic; see for instance \cite[Section~9.8]{GT} and \cite[Section~7.10]{Lie}. Its non-perturbative character plays a fundamental role in the regularity theory of fully nonlinear equations; see \cite{Caf}.

The corresponding theory for kinetic equations is considerably more delicate. For $(t,x,v)$ in an open subset of $\mathbb{R}\times\mathbb{R}^d\times\mathbb{R}^d$, consider 
\begin{align*}
\partial_tf(t,x,v)+v\cdot\nabla_xf(t,x,v)=A(t,x,v):D_v^2f(t,x,v). 
\end{align*}
The $d\times d$ matrix $A(t,x,v)$ is uniformly elliptic in the variable $v$, while regularity in the variable $x$ is produced indirectly through the streaming operator $v\cdot\nabla_x$. Compared with the elliptic and parabolic settings, the regularity theory for nondivergence kinetic equations with merely bounded measurable coefficients remains less complete. Beyond the continuous-coefficient regime, existing regularity results still require small-oscillation assumptions on the diffusion matrix, as in the Cordes-Nirenberg theory; see \cite[Theorem~1.1]{AT} and \cite[Theorem~1.3]{Z}. 

The purpose of this work is to establish a non-perturbative interior H\"older estimate for a stationary kinetic equation in space dimension one. Specifically, we study
\begin{align}\label{dfp}
v\;\!\partial_xf=a\;\!\partial_{vv}f, 
\end{align}
with $(x,v)\in\mathbb{R}\times\mathbb{R}$, where the function $a=a(x,v)$ is assumed to satisfy
\begin{align*}
\Lambda^{-1}\le a\le\Lambda, 
\end{align*} 
for some constant $\Lambda>1$. For $r>0$ and $x_0\in\mathbb{R}$, define the cylinder 
\begin{align*}
Q_r(x_0):=\bigl\{(x,v)\in\mathbb{R}\times\mathbb{R}:\,|x-x_0|<r^3,\;|v|<r\bigr\}.
\end{align*}
We abbreviate $Q_r:=Q_r(0)$. The main result is as follows. 

\begin{theorem}\label{mr}
There exist constants $\alpha\in(0,1)$ and $C>1$ depending only on $\Lambda$ such that, for any classical solution $f=f(x,v)$ to \eqref{dfp}, we have 
\begin{align*}
\|f\|_{C^\alpha(Q_1)}\le C\;\!\|f\|_{L^\infty(Q_2)}.  
\end{align*}
\end{theorem}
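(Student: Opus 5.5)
The plan is to prove the estimate through a decay-of-oscillation argument. It is enough to find $\theta\in(0,1)$ and $\rho\in(0,1)$, depending only on $\Lambda$, such that for every solution of \eqref{dfp} in $Q_2$,
\begin{align*}
\operatorname{osc}_{Q_{\rho}}f\le(1-\theta)\operatorname{osc}_{Q_2}f.
\end{align*}
The equation \eqref{dfp} is invariant under the kinetic scaling $f_r(x,v)=f(r^3x,rv)$: the map $(x,v)\mapsto(r^3x,rv)$ sends $Q_1$ to $Q_r$, and the coefficient becomes $a(r^3x,rv)$, which still satisfies the same bounds. There is no Galilean invariance here, since there is no $t$ variable. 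For centers $(x_0,v_0)$ with $v_0\neq 0$ I would therefore use the cylinders $Q_r(x_0)$ shifted in $v$, provided $r$ is small compared with $|v_0|$. In that regime $v\approx v_0$ is non-degenerate and \eqref{dfp} behaves like a uniformly parabolic equation in one space variable, with $x$ playing the role of time and $\pm\partial_x$ the time derivative according to the sign of $v_0$. Classical Krylov--Safonov then applies. So the genuinely new case is the neighborhood of $v=0$, and iterating the oscillation decay together with the parabolic estimates away from $v=0$ gives the $C^\alpha$ bound on $Q_1$ by the standard covering and iteration.

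To get oscillation decay near $v=0$, I would normalize $0\le f\le1$ on $Q_2$ and prove the following measure-to-pointwise (``half-line'') lemma. If $f\ge1/2$ on a set of proportion at least $1/2$ in some box $\{|x|<\epsilon,\ v\in(1/2,1)\}$, then $f\ge\delta$ on $Q_\rho$. The first step is to apply the parabolic weak Harnack in the region $v\in(1/4,2)$, where the flow runs forward in $x$. This spreads positivity forward in $x$ to a full neighborhood of the segment $\{x=x_1,\ v\in(1/4,1)\}$. The same reasoning for $v<0$ propagates backward in $x$.

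The heart of the argument is to transport positivity across $v=0$ using explicit barriers that exploit the one-dimensional structure. I would look for subsolutions of the form $\phi(x,v)=\psi(x-\kappa v^3\ldots)$, or better $\phi=-\lambda x+g(v)$ with $g$ convex and steep. For such $\phi$ one needs $v\phi_x\le a\phi_{vv}$, i.e.\ $-\lambda v\le a g''$, which only needs $g''\ge\Lambda\lambda|v|$. This works for any measurable $a$ because $a$ multiplies a single second derivative whose sign we control. Combining such barriers with the comparison principle (the maximum principle for \eqref{dfp}, which holds because of the strict drift away from $v=0$ and the diffusion in $v$) should carry a positive lower bound from the line $v\approx 1/2$ down to a neighborhood of $v=0$, and then to the region $v<0$.

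The main obstacle is making the barrier argument close. The domain has characteristic boundary at $v=0$, and the set where positivity is known must surround $Q_\rho$ on the parts of the parabolic-type boundary that matter: the inflow sides $\{x=-\cdot,\ v>0\}$ and $\{x=+\cdot,\ v<0\}$, plus the lateral sides $|v|=\text{const}$. I would first build the barrier in the region $v>0$ using the forward-propagated positivity at the inflow side. I would then control the strip near $v=0$ by a barrier like $\delta(1-Mv^2)-\lambda|x|$, checking the sign of $v\phi_x$ on each half separately. Dimension one is exactly what allows a single scalar $a$ and one-variable convexity profiles.

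Once the lemma holds, applying it to $f$ or $1-f$, whichever occupies at least half of the box, gives the oscillation reduction. Scaling then yields the Hölder exponent $\alpha=\log(1-\theta)/\log\rho$.
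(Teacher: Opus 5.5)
Your architecture matches the paper's: Krylov--Safonov in the parabolic regime away from $v=0$, oscillation decay on cylinders centered on $v=0$, then a combination. However, the step that carries the content is left open, namely transporting positivity from $v\approx\frac12$ across $v=0$. The barriers you propose cannot close it.

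The profile $\delta(1-Mv^2)-\lambda|x|$ is not a subsolution. On $\{v=0\}$ the transport term vanishes, so any subsolution of $v\,\partial_x\phi\le a\,\partial_{vv}\phi$ must satisfy $\partial_{vv}\phi\ge0$ there, whereas here $\partial_{vv}\phi=-2\delta M<0$. Where $\operatorname{sgn}x\ne\operatorname{sgn}v$ one even has $v\,\partial_x\phi=\lambda|v|>0>a\,\partial_{vv}\phi$.

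The family $\phi=-\lambda x+g(v)$ does consist of subsolutions, but no member can be positive on $\{v=0\}$ in a stationary comparison. Comparison on a domain $D$ requires $\phi\le f$ on the inflow boundary: at the left endpoint of each horizontal segment $\{v=v'\}\cap D$ with $v'>0$, and at the right endpoint when $v'<0$. The parabolic Harnack inequality gives positivity of $f$ only for $v$ bounded away from $0$. Now suppose $\lambda\ge0$ and $\phi(x_c,0)>0$. Then $\phi(x_c,v')>0$ for small $v'>0$. Since $\phi$ is nonincreasing in $x$, it is still positive at the left (inflow) endpoint of that segment, where you only know $f\ge0$. For $\lambda\le0$ the same happens with $v'<0$ and right endpoints. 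So ``making the barrier argument close'' is not a technicality; it is the missing idea.

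The paper avoids this inflow-boundary obstruction by lifting to an artificial time. Since $f$ is independent of $t$, it solves $Kf=0$ for $K=\partial_t+v\,\partial_x-a\,\partial_{vv}$ in $[0,T]\times Q_1$. Comparison there only needs the barrier to lie below $f$ at $t=0$ and to be $\le0$ on the whole lateral boundary $[0,T]\times\partial Q_1$; nothing about $f$ near $v=0$ is required.

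The barrier is the Gaussian bump $\exp\bigl(-(8\Lambda t+x^2+(v-V(t))^2)/T^2\bigr)-\exp\bigl(-1/(16T^2)\bigr)$. Its $v$-center $V(t)=\frac12(1-t/T)$ slides from $\frac12$, where the parabolic Harnack inequality gives $f\ge c_0f(-1,1)$ on a disk, down to $0$. The term $8\Lambda t$ in the exponent contributes $-8\Lambda/T^2$ to $K\psi$. This dominates the contributions of $2a$, of $-2xv$ (using $|xv|\le1$), and of the moving center (using $-s/T-4as^2/T^2\le 1/(16a)$), so $K\psi\le0$. With $T=1/(512\Lambda)$ one gets $\psi(T,\cdot)>0$ on $Q_{1/8}$.

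Your measure-to-pointwise formulation is also unnecessary. The full Harnack inequality is available for $v\in(\frac18,2)$, so a lower bound in terms of the single value $f(-1,1)$, applied to $F$ or $1-F$, already gives the oscillation reduction.
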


We remark that the class of equations of the form \eqref{dfp} is invariant under translations in $x$ and under the anisotropic scaling. More precisely, for any $f$ solving \eqref{dfp}, and for any $r>0$ and $x_0\in\mathbb{R}$, we have 
\begin{align*}
w\,\partial_yf(x_0+r^3y,rw)=a(x_0+r^3y,rw)\,\partial_{ww}f(x_0+r^3y,rw).
\end{align*}
In contrast, the stationary equation is not invariant under translations in $v$. Indeed, if $v=w+v_0$ with $v_0\in\mathbb{R}$, then
\begin{align*}
(w+v_0)\,\partial_xf(x,w+v_0)=a(x,w+v_0)\,\partial_{ww}f(x,w+v_0). 
\end{align*}
This lack of translation invariance in $v$ gives rise to two distinct local regimes, depending on whether $v$ is near or away from zero. Accordingly, the proof exploits the different structures available in these two regimes. When $v$ stays away from zero, \eqref{dfp} behaves like a uniformly parabolic equation, with $x$ serving as the evolution variable, so that the classical Krylov-Safonov estimates can be applied. For cylinders centered around $v=0$, this parabolic structure degenerates, and we have to exploit the streaming structure of \eqref{dfp}. In this regime, we construct an explicit barrier for the one-dimensional Kolmogorov operator $\partial_t+v\partial_x-a\;\!\partial_{vv}$ to control the growth of solutions near $v=0$. 

\section{Proof}
We first consider the oscillation control of solutions around $v=0$. It is established through the standard translation, rescaling and iteration scheme, based on the following growth lemma. 

\begin{lemma}\label{gl}
There exists some constant $c_\Lambda\in(0,1)$ depending only on $\Lambda$ such that, for any nonnegative solution $f$ to \eqref{dfp} in $Q_2$, we have 
\begin{align*}
f(x,v) \ge c_\Lambda f(-1,1) {\quad\,\rm in\ \,} Q_{1/8}.
\end{align*}
\end{lemma}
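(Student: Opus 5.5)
The plan is to propagate positivity in two stages: first along the region where $v$ stays away from zero, where \eqref{dfp} is a uniformly parabolic equation with $x$ as time, and then across $v=0$ by a comparison argument with an explicit barrier. After multiplying $f$ by a constant we may assume $f(-1,1)=1$.

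\textbf{Stage 1 (parabolic Harnack for $v>0$).} On $\{v\in(1/2,3/2)\}$ the equation reads $\partial_x f=(a/v)\,\partial_{vv}f$. This is a one-dimensional uniformly parabolic equation in the variables $(x,v)$, with forward time $x$ and ellipticity constants depending only on $\Lambda$. The classical Krylov--Safonov Harnack inequality applies to it, and we chain it along a bounded number of parabolic cylinders starting from $(-1,1)$. Since every $x$-translate of the admissible region with $v\in(1/4,7/4)$ lies in $Q_2$ (where $|x|<8$), this gives
\[
f\ge c_1 \quad\text{on } K_+:=[-1/2,\,4]\times[1/4,\,3/2],
\]
with $c_1=c_1(\Lambda)>0$. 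The chain only moves forward in $x$, so it yields nothing for $v<0$, and nothing near $v=0$ because the parabolicity degenerates there.

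\textbf{Stage 2 (barrier across $v=0$).} Fix a box $D=(x_1,x_2)\times(-R,R)\subset Q_2$ with, say, $x_1=-1/2$, $x_2=3$, $R=3/2$. For the stationary operator $L=v\partial_x-a\partial_{vv}$, the correct inflow boundary of $D$ is
\[
\{x=x_1,\ v>0\}\cup\{x=x_2,\ v<0\}\cup\{|v|=R\}.
\]
A weak maximum principle holds on $D$: a subsolution that is $\le0$ on this inflow boundary is $\le0$ in $D$. To prove it, one perturbs by $\varepsilon e^{\mu v}$. Since $L e^{\mu v}=-a\mu^2e^{\mu v}<0$, an interior maximum is excluded, and at the outflow faces the sign of $v\partial_x$ rules out a maximum.

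We then construct $\varphi$ with the following properties:
\begin{itemize}
\item $L\varphi\le0$ in $D$ minus the region where we already know $f\ge c_1$;
\item $\varphi\le0$ on the inflow boundary;
\item $\varphi\le c_1$ on $K_+$;
\item $\varphi\ge c_2>0$ on $Q_{1/8}$.
\end{itemize}
Comparison applied to $f-\varphi$ on $D\setminus K_+$ then gives the claim.

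The building blocks I would try first are these:
\begin{itemize}
\item the exact solution $-xv$, for which $L(-xv)=-v^2\le0$, producing a drift that raises $\varphi$ in the direction of the characteristics;
\item a factor $e^{-\beta x}$ combined with a $v$-profile $h(v)$. For $\varphi=g(x)h(v)$ we have $L\varphi=vg'h-agh''$. Near $v=0$ we need $h''\ge0$, so $h$ must be convex there. Away from $0$ we need $vg'h$ to dominate the term $-agh''$, with the appropriate sign in each of $v>0$ and $v<0$. This suggests letting $g$ grow in $x$ on the $v>0$ side and combining with the $-xv$ term to handle $v<0$.
\end{itemize}
Concretely, I would try
\[
\varphi=\theta\bigl(e^{\beta(x-x_1)}h(v)-\kappa-\lambda(x-x_1)v\bigr),
\]
with $h$ convex near $0$ and cut off by a large negative quadratic near $|v|=R$. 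The parameters would be tuned so that $L\varphi\le0$ holds outside $K_+$, and $\theta$ would be chosen small, depending on $c_1$.

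\textbf{Main obstacle.} The main obstacle is this barrier construction. The subsolution inequality must hold uniformly for all measurable $a\in[\Lambda^{-1},\Lambda]$, and this is delicate in the strip $|v|\lesssim1/4$. There the transport term is too weak to absorb the diffusion, so we must have $\varphi_{vv}\ge0$. Yet $\varphi$ must also become negative at $\{x=x_2,v<0\}$, which forces positivity to be carried into $v<0$ purely through diffusion across $v=0$.

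If a single product barrier does not close, the fallback is to apply Stage 1 on a \emph{larger} box. This makes $f\ge c_1$ available on a wide range of $x$ at $v\in[1/4,3/2]$, so the barrier only needs to work on a thin layer $|v|<1/4$ over a long $x$-interval. In that layer one can take $\varphi$ depending mainly on $v$, convex in $v$, plus the term $-\lambda xv$. After the barrier step yields positivity at some $v<0$, the mirror-image Harnack chain (with backward $x$-time on $v<0$) fills in the remaining points of $Q_{1/8}$.
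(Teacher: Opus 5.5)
Your Stage 1 is sound and essentially matches the paper. But the lemma rests on the Stage 2 barrier, you never produce it, and the candidates you propose cannot work for a structural reason.

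\textbf{Why the stationary barriers fail.} In your comparison on $D\setminus K_+$, all you know about $f$ on $\{x=x_1,\ 0<v<1/4\}$ and on $\{x=x_2,\ v<0\}$ is $f\ge0$. So your own requirements force $\varphi\le0$ there, and by continuity $\varphi(x_1,0)\le0$ and $\varphi(x_2,0)\le0$, while $\varphi(0,0)\ge c_2>0$. Hence $x\mapsto\varphi(x,0)$ must rise and then fall. In your ansatz the term $-\lambda(x-x_1)v$ vanishes on $\{v=0\}$, so $\varphi(x,0)=\theta\bigl(e^{\beta(x-x_1)}h(0)-\kappa\bigr)$, which is monotone in $x$. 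No choice of $\beta,h,\kappa,\lambda,\theta$ closes the argument.

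Your fallback (a thin layer, $\varphi$ essentially a function of $v$ plus $-\lambda xv$) fails for the same reason. Enlarging the Harnack region does not help either: it still only controls $f$ on $v\ge1/4$, so the two endpoints of the segment $\{v=0\}$ in the comparison domain always sit where only $f\ge0$ is available. In addition, $L\varphi=-a\varphi_{vv}$ on $\{v=0\}$ forces $\varphi_{vv}(x,0)\ge0$. A stationary barrier would therefore have to attain an interior maximum along $\{v=0\}$ while being convex in $v$ there, uniformly over measurable $a$. You give no such construction, so the proof is incomplete at its central step.

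\textbf{The missing idea: the paper's time-lift.} It removes exactly the obstruction you identified. Because $f$ is independent of $t$, it solves $Kf=0$ with $K=\partial_t+v\partial_x-a\partial_{vv}$ on $[0,T]\times Q_1$. One compares it with the moving Gaussian
\[
\psi=\exp\Bigl(-\frac{8\Lambda t+x^2+(v-V(t))^2}{T^2}\Bigr)-\exp\Bigl(-\frac{1}{16T^2}\Bigr),\qquad V(t)=\frac12\Bigl(1-\frac tT\Bigr),
\]
whose center slides from $v=\frac12$ to $v=0$.
\begin{itemize}
\item The factor $e^{-8\Lambda t/T^2}$ contributes $-8\Lambda$ to the bracket in $K\psi$, which beats the diffusion contribution $2a$ at the peak.
\item The other terms are bounded using $|xv|\le1$ in $Q_1$ and $-s-4as^2\le\frac{1}{16a}$ for $s=(v-V(t))/T$. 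Hence $K\psi\le0$ with no convexity needed at $v=0$.
\item $\psi\le0$ on all of $[0,T]\times\partial Q_1$, so no inflow/outflow analysis is needed.
\item The Harnack information enters only at $t=0$, where $\psi(0,\cdot)\ge0$ exactly on the disc $x^2+(v-\frac12)^2\le\frac1{16}$.
\end{itemize}
At $t=T=\frac{1}{512\Lambda}$ the maximum principle gives $f\ge c_0f(-1,1)\psi(T,\cdot)$ in $Q_1$, and $\psi(T,\cdot)$ is bounded below by a positive constant on $Q_{1/8}$.
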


\begin{proof}
We first note that, in $(-2,2)\times\left(\frac{1}{8},2\right)$, 
the operator $\partial_x-\frac{a}{v}\;\!\partial_{vv}$ is uniformly parabolic. The classical Harnack inequality implies that, for some $c_0\in(0,1)$ depending only on $\Lambda$, 
\begin{align}\label{pharnack}
f(x,v)\ge c_0\;\!f(-1,1) {\quad\,\rm in\ \,} x^2+\left(v-\frac{1}{2}\right)^{\!2}\le\frac{1}{16}. 
\end{align}

Next, in order to propagate the positivity, we apply the barrier argument in the time-lifted region $[0,T]\times Q_1$ for some constant $T>0$ to be determined. For $t\in[0,T]$, set  
\begin{align*}
 V(t):=\frac{1}{2}\!\left(1-\frac{t}{T}\right). 
\end{align*}
Consider the function $\psi:[0,T]\times Q_1\to(-\infty,1]$, 
\begin{align*}
\psi(t,x,v):=\exp\!\left(\!-\frac{8\Lambda\;\!t+x^2+(v-V(t))^2}{T^2}\right) - \exp\!\left(\!-\frac{1}{16\;\!T^2}\right), 
\end{align*}
and the operator 
\begin{align*}
K:=\partial_t+v\;\!\partial_x-a\;\!\partial_{vv}. 
\end{align*}
A direct computation yields  
\begin{align*}
K\psi = \frac{1}{T^2}\!\left(\!-8\Lambda-\frac{v-V(t)}{T}-2xv+2a-\frac{4a\;\!(v-V(t))^2}{T^2}\right) 
\exp\!\left(\!-\frac{8\Lambda\;\!t+x^2+(v-V(t))^2}{T^2}\right).
\end{align*}
Observing that $|xv|\le1$ in $Q_1$, and 
\begin{align*}
-\frac{v-V(t)}{T}-\frac{4\;\!a\;\!(v-V(t))^2}{T^2}
\le \frac{1}{16a} \le \frac{\Lambda}{16}, 
\end{align*}
we deduce 
\begin{align*}
-8\Lambda-\frac{v-V(t)}{T}-2xv+2a-\frac{4a\;\!(v-V(t))^2}{T^2}
\le 2-5\Lambda<0. 
\end{align*}
Therefore, 
\begin{align}\label{sub}
K\psi\le0 {\quad\,\rm in\ \,}[0,T]\times Q_1. 
\end{align}
At $t=0$, whenever $\psi(0,x,v)\ge0$, we have 
\begin{align*} 
x^2+\left(v-\frac{1}{2}\right)^{\!2}\le\frac{1}{16}.
\end{align*}
It then follows from \eqref{pharnack} and the fact $\psi\le1$ that 
\begin{align}\label{t0}
f(x,v)\ge c_0\;\!f(-1,1)\;\!\psi(0,x,v) {\quad\,\rm in\ \,}Q_1.
\end{align}
For any $(t,x,v)\in(0,T]\times\partial Q_1$, either $|x|=1$ or $|v-V(t)|\ge\frac{1}{2}$, since $0\le V(t)\le\frac{1}{2}$. In particular, 
\begin{align*}
8\Lambda\;\!t+x^2+(v-V(t))^2 \ge \frac{1}{4}, 
\end{align*}
which implies  
\begin{align}\label{tb}
\psi\le0 {\quad\,\rm in\ \,}[0,T]\times\partial Q_1. 
\end{align}
Gathering \eqref{sub}, \eqref{t0}, \eqref{tb}, $Kf=0$, and applying the maximum principle in $[0,T]\times Q_1$, we obtain
\begin{align}\label{low}
f(x,v)\ge c_0\;\!f(-1,1)\;\!\psi(T,x,v) {\quad\,\rm in\ \,}Q_1. 
\end{align}

Finally, choosing $T:=\frac{1}{512\;\!\Lambda}$ and recalling the definition of $\psi$, we have 
\begin{align*}
\psi(T,x,v) = \exp\!\left(\!-\frac{1}{64\;\!T^2}-\frac{x^2+v^2}{T^2}\right)- \exp\!\left(\!-\frac{1}{16\;\!T^2}\right), 
\end{align*}
where
\begin{align*}
x^2+v^2\le\frac{1}{32} {\quad\,\rm in\ \,} Q_{1/8}. 
\end{align*}
Combining this with \eqref{low}, we conclude the desired result with 
\begin{align*}
c_\Lambda:=c_0\!\left(\exp\!\left(\!-\frac{3}{64\;\!T^2}\right)- \exp\!\left(\!-\frac{1}{16\;\!T^2}\right) \right) >0. 
\end{align*} 
\end{proof}

This implies the H\"older regularity of solutions along $v=0$. 

\begin{corollary}\label{osc}
Let $|x_0|\le1$ and $r\in(0,1]$. There exist constants $\beta\in(0,1)$ and $C>1$ depending only on $\Lambda$ such that, for any solution $f$ to \eqref{dfp} in $Q_2$, 
\begin{align*}
{\rm osc}_{Q_r(x_0)}f \le C\;\!r^\beta\|f\|_{L^\infty(Q_2)}.
\end{align*}
\end{corollary}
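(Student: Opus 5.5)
We use the standard oscillation-decay iteration built on Lemma~\ref{gl}, applied to suitably normalized translates and rescalings of $f$ and of $M-f$. Fix $|x_0|\le1$. For $\rho\in(0,1/2]$ set $g_\rho(y,w):=f(x_0+\rho^3y,\rho w)$. By the scaling invariance noted in the introduction, $g_\rho$ solves an equation of the form \eqref{dfp} with coefficient $a(x_0+\rho^3y,\rho w)\in[\Lambda^{-1},\Lambda]$. It is defined on $Q_2$ whenever $Q_{2\rho}(x_0)\subset Q_2$, which holds since $|x_0|+8\rho^3\le 2$ for $\rho\le1/2$.

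\textbf{One-step decay.} We claim there is $\theta\in(0,1)$, depending only on $\Lambda$, such that for $\rho\le1/2$,
\begin{align*}
{\rm osc}_{Q_{\rho/8}(x_0)}f\le\theta\,{\rm osc}_{Q_{2\rho}(x_0)}f .
\end{align*}
To see this, let $M=\sup_{Q_{2\rho}(x_0)}f$ and $m=\inf_{Q_{2\rho}(x_0)}f$. Both $g_\rho-m$ and $M-g_\rho$ are nonnegative solutions in $Q_2$, since the equation is linear and constants solve it. Their sum at the point $(-1,1)$ equals $M-m$, so one of them, call it $h$, satisfies $h(-1,1)\ge (M-m)/2$. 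Lemma~\ref{gl} then gives $h\ge c_\Lambda(M-m)/2$ in $Q_{1/8}$. If $h=g_\rho-m$, this raises the infimum of $f$ on $Q_{\rho/8}(x_0)$ to at least $m+c_\Lambda(M-m)/2$. If $h=M-g_\rho$, it lowers the supremum by the same amount. In either case the claim holds with $\theta=1-c_\Lambda/2$.

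\textbf{Iteration.} Set $\rho_k:=16^{-k}$, so that $2\rho_{k+1}=\rho_k/8$. Applying the claim with $\rho=\rho_k$ for $k\ge1$ gives ${\rm osc}_{Q_{2\rho_{k+1}}(x_0)}f\le\theta\,{\rm osc}_{Q_{2\rho_k}(x_0)}f$. Since ${\rm osc}_{Q_{2\rho_1}(x_0)}f\le2\|f\|_{L^\infty(Q_2)}$, induction yields ${\rm osc}_{Q_{2\rho_k}(x_0)}f\le2\theta^{k-1}\|f\|_{L^\infty(Q_2)}$.

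\textbf{General radius.} Given $r\in(0,1]$, if $r\ge 2\rho_1=1/8$ the bound holds trivially with $C\ge 2\cdot 8^\beta$. Otherwise choose $k\ge1$ with $2\rho_{k+1}< r\le 2\rho_k$. Then $\theta^{k}\le C r^\beta$ with $\beta:=\log(1/\theta)/\log 16$, which gives the stated estimate.

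\textbf{Main obstacle.} The delicate point is that Lemma~\ref{gl} only transfers positivity from the off-center point $(-1,1)$, not from an average over a set. The two-sided trick above handles this, because $h_1(-1,1)+h_2(-1,1)=M-m$ holds pointwise. We must also check that every rescaled cylinder stays centered at $v=0$. This is true, since we only translate in $x$, and translations in $x$ are exactly the symmetry the equation has.
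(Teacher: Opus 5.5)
Your proof is correct and follows the paper's argument. The paper rescales by $r/2$, applies Lemma~\ref{gl} to the normalized $F$ and $1-F$, and obtains ${\rm osc}_{Q_{r/16}(x_0)}f\le(1-c_\Lambda/2)\,{\rm osc}_{Q_r(x_0)}f$; this is exactly your one-step decay with $r=2\rho$ and the pair $g_\rho-m$, $M-g_\rho$, and you also spell out the iteration the paper calls standard. One harmless slip: the $x$-half-width of $Q_2$ is $2^3=8$, so the containment condition is $|x_0|+8\rho^3\le 8$, which your inequality $|x_0|+8\rho^3\le 2$ implies.
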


\begin{proof}
Let $r\in(0,1]$ and suppose that $f$ solves \eqref{dfp} in $Q_r(x_0)$. Then, the function 
\begin{align*}
g(y,w):=f\!\left(x_0+\frac{r^3}{8}y,\;\!\frac{r}{2}w\right)
\end{align*}
satisfies 
\begin{align*}
w\;\!\partial_yg=a\!\left(x_0+\frac{r^3}{8}y,\;\!\frac{r}{2}w\right)\,\partial_{ww}g {\quad\,\rm in\ \,}Q_2. 
\end{align*}
It follows from Lemma~\ref{gl} that, for some constant $c_\Lambda\in(0,1)$, 
\begin{align*}
g(y,w)\ge c_\Lambda\;\!g(-1,1) {\quad\,\rm in\ \,}Q_{1/8}, 
\end{align*}
meaning that 
\begin{align}\label{12}
f(x,v)\ge c_\Lambda f\!\left(x_0-\frac{r^3}{8},\frac{r}{2}\right) {\quad\,\rm in\ \,}Q_{r/16}(x_0).
\end{align}
Consider the normalization that 
\begin{align*}
F:=\frac{f-\inf_{Q_r(x_0)}f}{{\rm osc}_{Q_r(x_0)}f}. 
\end{align*}
Both functions $F$ and $1-F$ are valued in $[0,1]$ and satisfy \eqref{dfp}. Since at least one of $F$ and $1-F$ is no less than $\frac{1}{2}$ at any given point, applying \eqref{12} to these two solutions yields either 
\begin{align*}
F\ge\frac{c_\Lambda}{2}\quad\text{in }Q_{r/16}(x_0),
\end{align*}
or
\begin{align*}
F\le1-\frac{c_\Lambda}{2}\quad\text{in }Q_{r/16}(x_0). 
\end{align*}
Transforming back to $f$, both alternatives imply 
\begin{align*}
{\rm osc}_{Q_{r/16}(x_0)}f\le \left(1-\frac{c_\Lambda}{2}\right){\rm osc}_{Q_r(x_0)}f. 
\end{align*}
The claimed result is then a consequence of the standard iteration argument. 
\end{proof}

We conclude the proof of the main result. 
\begin{proof}[Proof of Theorem~\ref{mr}]
In view of Corollary~\ref{osc}, it suffices to establish the oscillation control away from $v=0$. Let $z_0:=(x_0,v_0)\in Q_1$ with $v_0\neq0$, and $r>0$. Define
\begin{align*}
h(y,w):=f\!\left(x_0+v_0^3\;\!y,\;\!v_0+|v_0|\;\!w\right). 
\end{align*}
A direct change of variables of \eqref{dfp} produces the following uniformly parabolic equation, 
\begin{align*}
\partial_yh=\frac{a\!\left(x_0+v_0^3\;\!y,\;\!v_0+|v_0|w\right)}{1+{\rm sgn}(v_0)\;\!w}\,\partial_{ww}h {\quad\,\rm in\ \,}Q_{1/2}.
\end{align*}
By applying the Krylov-Safonov estimate to $h$ and transforming back to $f$, we deduce that, there are some $\gamma\in(0,1)$ and $C>1$ such that, for any $r\in(0,1]$, 
\begin{align*}
{\rm osc}_{R_r(z_0)}f\le C\;\!\|f\|_{L^\infty(Q_2)}\!\left(\frac{r}{|v_0|}\right)^{\!\gamma}, 
\end{align*}
where $R_r(z_0):=\bigl\{(x,v):\;\!|x-x_0|<r^3,\;\!|v-v_0|<r\bigr\}$. The proof is then complete by combining this with Corollary~\ref{osc}.  
\end{proof}

\end{document}